\numberwithin{equation}{section}
\newtheorem{definition}{Definition}[section]
\newtheorem{lem}[definition]{Lemma}
\newtheorem{thm}[definition]{Theorem}
\newtheorem{cor}[definition]{Corollary}
\title{The non-nil-invariance of TP}
\author{Ryo Horiuchi}
\date{}
\begin{document}

\newpage
\maketitle


\section{Introduction}
In \cite{Hesselholt1}, Hesselholt defined a spectrum $\operatorname{TP}(X)$, the periodic topological cyclic homology of a scheme $X$, using topological Hochschild homology and the Tate construction, which is a topological analogue of Connes-Tsygan periodic cyclic homology $\operatorname{HP}$ defined by Hochschild homology and the Tate construction. In \cite[Theorem II.5.1]{Goodwillie}, Goodwillie proved that for $R$ an algebra over a field of characteristic 0 and $I$ a nilpotent ideal of $R$, the quotient map $R\to R/I$ induces an isomorphism on $\operatorname{HP}$. In this paper, we show that the analogous result for $\operatorname{TP}$ does not hold, that is to say, there is an algebra of positive characteristic and a nilpotent ideal such that the quotient map does not induce an isomorphism on $\operatorname{TP}$, even rationally. More precisely, we prove the following result.
\begin{thm}Let $p$ be a prime number and $k\geq2$ a natural number. Then the canonical map $$\operatorname{TP}_*(\mathbb{F}_p[x]/(x^k))\to \operatorname{TP}_*(\mathbb{F}_p)$$ is not an isomorphism. Moreover, if $k$ is not a $p$-power, then the map is also not an isomorphism after inverting $p$.
\end{thm}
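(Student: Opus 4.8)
The plan is to reduce to the reduced theory, split it along the internal weight grading, and show that the whole discrepancy is concentrated in odd degrees, where it is detected by torsion whose exponents are bounded precisely when $k$ is a $p$-power. First I would use that the augmentation $A:=\mathbb{F}_p[x]/(x^k)\to\mathbb{F}_p$ splits the unit, so that $\operatorname{TP}(\mathbb{F}_p)$ is a retract of $\operatorname{TP}(A)$ and the canonical map is an isomorphism (respectively an isomorphism after inverting $p$) if and only if the reduced term $\widetilde{\operatorname{TP}}(A):=\operatorname{fib}(\operatorname{TP}(A)\to\operatorname{TP}(\mathbb{F}_p))$ has vanishing homotopy (respectively after inverting $p$). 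The grading of $A$ by powers of $x$ gives a $\mathbb{G}_m$-action, hence a weight grading on $\operatorname{THH}(A)$ preserved by the circle action; the weight-zero part recovers $\operatorname{THH}(\mathbb{F}_p)$, and $\widetilde{\operatorname{TP}}(A)$ assembles the Tate constructions of the weight $w\ge 1$ pieces.

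Next I would compute these pieces through the Bhatt--Morrow--Scholze motivic filtration on $\operatorname{TP}(A;\mathbb{Z}_p)$, whose graded pieces are shifts and Frobenius twists of the Nygaard-complete prismatic cohomology; for the crystalline prism $(\mathbb{Z}_p,(p))$ this is modelled by the de Rham complex $\Omega^\bullet_D$ of the divided-power envelope $D$ of $(x^k)\subset\mathbb{Z}_p[x]$. Because $A$ is a one-dimensional complete intersection, $\Omega^\bullet_D$ lives in cohomological degrees $0$ and $1$, so in each total degree only a single filtration weight contributes and there are no extension problems; concretely $\operatorname{TP}_{2d}(A)\cong H^0$ and $\operatorname{TP}_{2d-1}(A)\cong H^1$ of a twist of $\Omega^\bullet_D$. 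The complex splits along the weight grading into rank-one pieces: in weight $w\ge 1$ it is $\mathbb{Z}_p\xrightarrow{\,c_w\,}\mathbb{Z}_p$ with $c_w=w$ when $k\nmid w$ and $c_w=k$ when $k\mid w$. (This differential is the Connes operator, which on weight $w$ is multiplication by $w$; I would cross-check the integers $c_w$ against the Tate spectral sequence for $\operatorname{THH}(A)^{tS^1}$, where weights with $p\nmid w$ are killed by the $d_2$-differential and the others survive.) Hence the reduced $H^0$ vanishes, so the map is an isomorphism on even homotopy groups, while the reduced $H^1$ in weight $w$ is $\mathbb{Z}/p^{v_p(c_w)}$.

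It then remains to read off the two assertions from the odd groups, where $\operatorname{TP}_{\mathrm{odd}}(\mathbb{F}_p)=0$ while $\operatorname{TP}_{2d-1}(A)$ is the derived $p$-completion of $\bigoplus_{w\ge 1}\mathbb{Z}/p^{v_p(c_w)}$. This group is nonzero for every $k\ge 2$, since there is always a weight with $v_p(c_w)\ge 1$---for instance $w=p$ when $p\nmid k$ and $w=k$ when $p\mid k$---which gives the first assertion. For the second, the completion of a sum of cyclic $p$-groups is annihilated by inverting $p$ exactly when the exponents are bounded, so I must decide when $\sup_w v_p(c_w)<\infty$. Writing $k=p^a f$ with $p\nmid f$: if $f=1$ then $k\mid w$ forces $v_p(c_w)=a$ and $k\nmid w$ forces $v_p(w)<a$, so the exponents are bounded; if $f>1$ then $w=p^N$ satisfies $k\nmid w$ for all $N$, whence $v_p(c_w)=N\to\infty$ and the exponents are unbounded. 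Thus $\widetilde{\operatorname{TP}}_*(A)[1/p]\ne 0$ precisely when $k$ is not a $p$-power.

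The main obstacle is the computation in the second paragraph: identifying the Nygaard-complete prismatic cohomology carefully enough to pin down the exact integers $c_w$---including the effect of the Nygaard completion, the Breuil--Kisin twist, and the divided powers attached to $(x^k)$---and justifying that the $p$-complete, weight-graded assembly really produces the completion of $\bigoplus_w\mathbb{Z}/p^{v_p(c_w)}$ rather than the naive sum. It is exactly this $p$-completion, which turns unbounded torsion into a class that survives after inverting $p$, that produces the $p$-power dichotomy, so the care lies in controlling it.
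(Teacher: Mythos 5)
Your route is genuinely different from the paper's, and its skeleton is viable. The paper never leaves equivariant stable homotopy theory: it decomposes the relative term via the pointed-monoid equivalence $\operatorname{THH}(\mathbb{F}_p[x]/(x^k))\simeq\operatorname{THH}(\mathbb{F}_p)\otimes{\rm N}^{\operatorname{cy}}(\Pi_k)$, identifies the weight pieces as $S^{\lambda_d}\wedge(\mathbb{T}/C_i)_+$, and quotes the Hesselholt--Madsen computation of $\hat{\rm H}^{\cdot}(C_{p^n},\operatorname{THH}(\mathbb{F}_p)\otimes S^{\lambda_d})$; you instead go through the Bhatt--Morrow--Scholze motivic filtration and a crystalline (PD de Rham) model of its graded pieces. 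Your reduction to the relative term, your weight decomposition, and your integers $c_w$ (namely $c_w=w$ for $k\nmid w$ and $c_w=k$ for $k\mid w$) all match the paper's answer, which puts $\mathbb{Z}/p^{v_p(i)}\mathbb{Z}$ in the weights $i\notin k\mathbb{N}$ and $\mathbb{Z}/p^{v_p(k)}\mathbb{Z}$ in the weights $i\in k\mathbb{N}$, concentrated in odd degrees.

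There is, however, a genuine error at exactly the point you flag as the main obstacle. The relative group in odd degrees is \emph{not} the derived $p$-completion of $\bigoplus_w\mathbb{Z}/p^{v_p(c_w)}$; by the paper's Theorem 3.3 it is the unrestricted product $\prod_w\mathbb{Z}/p^{v_p(c_w)}$, and these genuinely differ. For $k=p$ the paper's group is $\prod_{p\mid w}\mathbb{Z}/p$, which is uncountable, whereas the derived $p$-completion of $\bigoplus_{p\mid w}\mathbb{Z}/p$ is that direct sum itself (it is killed by $p$), which is countable. The source of the discrepancy is that $\Omega^\bullet_D$ computes crystalline, i.e.\ non-Nygaard-completed prismatic, cohomology; for this singular ring the Nygaard completion is not a harmless technicality but is precisely what converts the ($p$-completed) direct sum over weights into the product, so the resolution you propose for your ``main obstacle'' is the incorrect one. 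The paper obtains the product by an elementary argument you could adopt (Lemma 3.2): the weight-$w$ pieces of relative $\operatorname{THH}$ have connectivity tending to infinity, so their wedge agrees with their product; homotopy orbits and homotopy fixed points then both yield products, and the Tate construction, being a cofiber, commutes with products. Fortunately your final dichotomy uses only that some $v_p(c_w)\geq1$ and that unbounded exponents produce an element of infinite order surviving inversion of $p$; both statements hold for the product as well (the element $(1,1,1,\dots)$ already has infinite order there), so your conclusion is reparable, but as written the computation feeding it is wrong. Two smaller points: the theorem concerns integral $\operatorname{TP}$ while BMS computes $\operatorname{TP}(-;\mathbb{Z}_p)$, so you should note that they agree here because $\operatorname{THH}(A)$ is a $\operatorname{THH}(\mathbb{F}_p)$-module and all homotopy groups in sight are degreewise bounded $p$-torsion; and your Tate spectral sequence cross-check is only heuristic, since the sum-versus-product question is hidden exactly in the convergence of that conditionally convergent spectral sequence.
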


In \cite{Hesselholt1}, Hesselholt gives a cohomological interpretation by TP of the Hasse-Weil zeta function of a scheme smooth and proper over a finite field inspired by \cite{Deninger} and \cite{CC}. Furthermore, in \cite{AMN} and \cite{BM1}, it is proved that $\operatorname{TP}$ satisfies the K\"unneth formula for smooth and proper dg-categories over a perfect field of positive characteristic. Therefore, this new cohomology theory TP may be considered to be an important cohomology theory for $p$-adic geometry and non-commutative geometry. Our result concerns a fundamental property of this theory. In Theorem 3.3, we evaluate the TP-group of $\mathbb{F}_p[x]/(x^k)$ completely.

\section{Periodic topological cyclic homology}

Periodic topological cyclic homology $\operatorname{TP}$ is proposed in \cite{Hesselholt1}. In this section, we briefly recall some notions from there. We let $\mathbb T$ denote the circle group. The following construction written in the higher categorical language can be found at \cite[I.4]{NS}.

Let $E$ be a free $\mathbb T$-CW-complex whose underlying space is contractible. Then we consider the following cofibration sequence of pointed $\mathbb{T}$-spaces
\[E_+\to S^0\to \tilde{E},\]
where $E_+$ is the pointed space $E\sqcup \{\infty\}$ and $S^0=\{0, \infty\}$, and the left map sends $\infty$ to the base point $\infty\in S^0$ and all other points to $0\in S^0$.

Let $X$ be a $\mathbb T$-spectrum. Smashing the internal hom spectrum $[E_+, X]$ with the above diagram and taking fixed points of a subgroup $C\subset\mathbb T$, we have the following sequence called Tate cofibration sequence
\[(E_+\otimes_{\mathbb{S}}[E_+, X])^C\to ([E_+, X])^C\to (\tilde{E}\otimes_{\mathbb{S}}[E_+, X])^C.\]

\hspace{-6mm}We write this sequence as
\begin{eqnarray}\nonumber
(E_+\otimes[E_+, X])^C=
\begin{cases}
   \operatorname{H}_\cdot(C, X), & if\ C\subsetneq\mathbb{T}  \\
    \Sigma{\rm H}_{\cdot}(C, X), & if\ C=\mathbb{T},
\end{cases}\end{eqnarray}
\vspace{-7mm}\begin{eqnarray}
([E_+, X])^C={\rm H}^{\cdot}(C, X) \nonumber\\
(\tilde{E}\otimes[E_+, X])^C=\hat{\rm H}^{\cdot}(C, X)\nonumber
\end{eqnarray}


Let $X$ be a scheme. The topological periodic cyclic homology of $X$ is the spectrum given by $$\operatorname{TP}(X)=\hat{\rm H}^{\cdot}(\mathbb{T}, {\rm THH}(X)),$$ where ${\rm THH}$ denotes the topological Hochschild homology of $X$ defined in \cite{GH} and \cite{BM2}.
In the present paper, we will only consider affine schemes. For a commutative ring $R$, there is a conditionally convergent spectral sequence \cite[$\S$4]{HM3},
\[E^2_{i,j}=S\{t, t^{-1}\}\otimes{\rm THH}_j(R)\Rightarrow \operatorname{TP}_{i+j}(R),\]
where deg$(t)=(-2, 0)$.



\section{Truncated polynomial algebras}
Our main result is the following

\begin{thm}Let $p$ be a prime number and $k\geq2$ a natural number. If $k$ is not a $p$-power, then the canonical map $$\operatorname{TP}_*(\mathbb{F}_p[x]/(x^k))[1/p]\to \operatorname{TP}_*(\mathbb{F}_p)[1/p]$$ is not an isomorphism.
\end{thm}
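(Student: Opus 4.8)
The plan is to deduce the statement from a complete evaluation of $\operatorname{TP}_*(\mathbb{F}_p[x]/(x^k))$ together with a comparison of free ranks over $\mathbb{Z}_p$. Write $A=\mathbb{F}_p[x]/(x^k)$ and let $\pi\colon A\to\mathbb{F}_p$ be the augmentation $x\mapsto 0$, with section $\iota\colon\mathbb{F}_p\to A$. Since $\pi\circ\iota=\operatorname{id}$, the map $\operatorname{TP}(\pi)$ is split surjective, so there is a natural splitting $\operatorname{TP}_*(A)\cong\operatorname{TP}_*(\mathbb{F}_p)\oplus\widetilde{\operatorname{TP}}_*(A)$ in which $\widetilde{\operatorname{TP}}_*(A)=\ker\operatorname{TP}_*(\pi)$ is the reduced part. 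The map in the statement is an isomorphism after inverting $p$ if and only if $\widetilde{\operatorname{TP}}_*(A)[1/p]=0$, so I would reduce everything to showing that this reduced group is \emph{not} annihilated by inverting $p$ when $k$ is not a $p$-power.

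Next I would exploit that $A=\mathbb{F}_p[\Pi]$ is the pointed monoid algebra on $\Pi=\{0,1,x,\dots,x^{k-1}\}$, so that $\operatorname{THH}(A)$ splits $\mathbb{T}$-equivariantly into weight summands $\operatorname{THH}(A)_{(i)}$, $i\geq0$, with $\operatorname{THH}(A)_{(0)}=\operatorname{THH}(\mathbb{F}_p)$. Using the Hesselholt--Madsen identification of $\operatorname{THH}(A)_{(i)}$ for $i\geq1$ with a suspension of a spectrum induced from the cyclic subgroup $C_i\subset\mathbb{T}$ and concentrated in degrees near $2\lceil i/k\rceil$, I would feed each summand into the Tate spectral sequence of Section 2. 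Because the Tate construction of a spectrum induced from $C_i$ is governed by the Tate cohomology of $C_i$, which vanishes $\mathbb{F}_p$-linearly when $p\nmid i$, only the weights $i$ divisible by $p$ contribute to $\widetilde{\operatorname{TP}}_*(A)=\prod_{i\geq1}\bigl(\operatorname{THH}(A)_{(i)}\bigr)^{t\mathbb{T}}$. Each such weight contributes, $\sigma$-periodically, a finite cyclic $p$-group $\mathbb{Z}/p^{c_i}$ whose length $c_i$ is controlled by the $p$-adic valuation of $i$ relative to $k$; assembling these contributions is the substance of Theorem 3.3.

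With the complete computation in hand, the rational statement becomes a boundedness question. A countable product $\prod_i\mathbb{Z}/p^{c_i}$ is annihilated by inverting $p$ precisely when the exponents $c_i$ are bounded, and it contains a nonzero $\mathbb{Q}_p$-vector space as soon as $c_i\to\infty$ along some subsequence. I would therefore show that the exponents appearing in $\widetilde{\operatorname{TP}}_*(A)$ are bounded exactly when $k$ is a power of $p$: when $k=p^v$ the truncation $x^{p^v}=0$ is compatible with the Frobenius linking weight $i$ to weight $pi$, which caps every $c_i$ and forces $\widetilde{\operatorname{TP}}_*(A)$ to be bounded $p$-torsion, whereas if $k$ is not a $p$-power I would exhibit an explicit family of weights (for instance multiples of $k$ of increasing $p$-adic valuation) along which $c_i\to\infty$. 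This produces a nonzero $\mathbb{Q}_p$-summand in $\widetilde{\operatorname{TP}}_*(A)[1/p]$ and hence shows that the map is not an isomorphism after inverting $p$.

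The main obstacle is the middle step: pinning down the differentials and, above all, the hidden multiplicative extensions in the Tate spectral sequence that assemble the $\mathbb{F}_p$-lines of the $E^2$-page into the cyclic groups $\mathbb{Z}/p^{c_i}$. The differentials are largely forced by multiplicativity and by comparison with the known answer $\operatorname{TP}_*(\mathbb{F}_p)=\mathbb{Z}_p[\sigma^{\pm1}]$ for the weight-zero summand, but the precise lengths $c_i$ — equivalently the exact extension data — are delicate, and it is exactly the boundedness of these lengths that encodes the dichotomy between $k$ being or not being a $p$-power.
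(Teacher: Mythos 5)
Your overall skeleton---the pointed-monoid weight decomposition of $\operatorname{THH}(\mathbb{F}_p[x]/(x^k))$, the Hesselholt--Madsen identification of the weight pieces, the product formula for the reduced part of $\operatorname{TP}$, and the reduction of the rational statement to unboundedness of the torsion exponents $c_i$---is exactly the paper's route. But the step where you actually establish unboundedness fails. According to the computation you are appealing to (the paper's Theorem 3.3, which rests on the two Hesselholt--Madsen calculations), the weights $i \in k\mathbb{N}$ contribute $\mathbb{Z}/p^{v_p(k)}\mathbb{Z}$, a cyclic group whose exponent is the \emph{constant} $v_p(k)$, independent of $i$, while it is the weights $i \notin k\mathbb{N}$ that contribute $\mathbb{Z}/p^{v_p(i)}\mathbb{Z}$. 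So your proposed family, ``multiples of $k$ of increasing $p$-adic valuation,'' has exponents capped at $v_p(k)$ and can never give $c_i \to \infty$. There is also a structural sanity check showing this family must be wrong: multiples of $k$ of arbitrarily large $p$-adic valuation exist for \emph{every} $k \geq 2$ (take $i = kp^n$), so if they forced unbounded exponents, the map would fail to be a rational isomorphism for all $k$, including $k = p^r$---contradicting the paper's Corollary 3.4 (and your own claim, in the same paragraph, that for $k = p^v$ everything is bounded). The correct family is the opposite one: take $i = p^n$. Since $k$ is not a $p$-power, $k$ never divides $p^n$, so these weights lie in the range $i \notin k\mathbb{N}$ and contribute $\mathbb{Z}/p^{n}\mathbb{Z}$, unbounded as $n \to \infty$; this is precisely how the paper concludes, and it is exactly here that the hypothesis that $k$ is not a $p$-power enters.

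Two smaller points. First, the product decomposition $\widetilde{\operatorname{TP}}_*(A) \cong \prod_{i\geq1}\pi_*\hat{\rm H}^{\cdot}(\mathbb{T},\operatorname{THH}(\mathbb{F}_p)\otimes{\rm N}^{\operatorname{cy}}(\Pi_k,i))$ that you assert is not automatic---the Tate construction does not commute with infinite wedges---and the paper proves it (Lemma 3.2) via the norm cofibration sequence: homotopy orbits commute with the wedge, the connectivity of the summands tends to infinity so the sum agrees with the product, homotopy fixed points commute with products, and Tate is the cofiber. Second, the ``main obstacle'' you identify---differentials and hidden extensions in the Tate spectral sequence assembling $\mathbb{F}_p$-lines into the groups $\mathbb{Z}/p^{c_i}\mathbb{Z}$---does not arise in the paper's argument: the cyclic groups come directly from Hesselholt--Madsen's computation $\pi_*\hat{\rm H}^{\cdot}(C_{p^n},\operatorname{THH}(\mathbb{F}_p)\otimes S^{\lambda_d}) \cong S_{\mathbb{Z}/p^n\mathbb{Z}}\{t,t^{-1}\}$, combined with the induction equivalence $\hat{\rm H}^{\cdot}(\mathbb{T}, X\otimes(\mathbb{T}/C_i)_+)\simeq\Sigma\hat{\rm H}^{\cdot}(C_i,X)$ and the reduction to the $p$-Sylow subgroup, so no extension problem has to be solved. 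With those two repairs---citing the Hesselholt--Madsen computations for the exponents, and replacing your family of weights by $i = p^n$---your argument becomes the paper's proof.
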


Before proving our main result,  we recall from \cite{HM2} and \cite{Hesselholt2} some calculations concerning ${\rm THH}(\mathbb{F}_p[x]/(x^k))$. The following also shown in \cite[Paper B]{S} in the higher categorical language.

We give the pointed finite set $\Pi_k$=$\{0, 1, x, \dots, x^{k-1}\}$ with the base point $0$ the pointed commutative monoid structure, where $1$ is the unit, $0\cdot1=0\cdot x^i=0$, $x^i\cdot x^j=x^{i+j}$ and $x^k=0$. We denote the cyclic bar construction of $\Pi_k$ by ${\rm N}^{\operatorname{cy}}_{\bullet}(\Pi_k)$. More precisely, the set of $l$-simplicies is

$${\rm N}^{\operatorname{cy}}_l(\Pi_k)=\Pi_k\wedge\dots\wedge\Pi_k,$$
where there are $l+1$ smash factors and the structure maps are given by

$d_i(x_0\wedge\dots\wedge x_l)=x_0\wedge\dots\wedge x_ix_{i+1}\wedge\dots\wedge x_l, \ 0\leq i < l,$\

$d_l(x_0\wedge\dots\wedge x_l)=x_lx_0\wedge x_1\wedge\dots\wedge x_{k-1},$\

$s_i(x_0\wedge\dots\wedge x_l)=x_0\wedge\dots\wedge x_i\wedge 1 \wedge x_{i+1}\wedge\dots\wedge x_l, \ 0\leq i \leq l,$\

$t_l(x_0\wedge\dots\wedge x_l)=x_l\wedge x_0\wedge x_1\wedge\dots\wedge x_{l-1}$.
\ \\
We let ${\rm N}^{\operatorname{cy}}(\Pi_k)$ denote the geometric realization of ${\rm N}^{\operatorname{cy}}_{\bullet}(\Pi_k)$.

In \cite[Theorem 7.1]{HM1}, it is proved that there is a natural equivalence of cyclotomic spectra
\begin{equation} {\rm THH}(\mathbb{F}_p[x]/(x^k))) \simeq {\rm THH}(\mathbb{F}_p)\otimes {\rm N}^{\operatorname{cy}}(\Pi_k). \tag{a}
\end{equation}

\hspace{-6.5mm}For each positive integer $i$, we also have the cyclic subset $${\rm N}^{\operatorname{cy}}_{\bullet}(\Pi_k, i)\subset{\rm N}^{\operatorname{cy}}_{\bullet}(\Pi_k)$$ generated by the $(i-1)$-simplex $x\wedge\dots\wedge x$ ($i$ factors), and denote the geometric realization by ${\rm N}^{\operatorname{cy}}(\Pi_k, i)$. We also have the cyclic subset ${\rm N}^{\operatorname{cy}}_{\bullet}(\Pi_k, 0)$ generated by the $0$-simplex 1 with the geometric realization ${\rm N}^{\operatorname{cy}}(\Pi_k, 0)$. Thus we obtain the following wedge decomposition
\[\bigvee_{i\geq0}{\rm N}^{\operatorname{cy}}(\Pi_k, i)={\rm N}^{\operatorname{cy}}(\Pi_k).\]

We consider the complex $\mathbb T$-representation, where $d=\lfloor(i-1)/k\rfloor$ is the integer part of $(i-1)/k$ for $i\geq1$,

\[\lambda_d=\mathbb{C}(1)\oplus\mathbb{C}(2)\oplus\dots\oplus\mathbb{C}(d),\] where $\mathbb{C}(i)=\mathbb{C}$ with the $\mathbb{T}$ action; $$\mathbb{T}\times\mathbb{C}(i)\to\mathbb{C}(i)$$ defined by $\ (z,w)\mapsto z^iw.$ Then we have the following by \cite[theorem B]{HM2}, for $i\geq1$ such that $i\notin k\mathbb{N}$, there is an equivalence
\begin{equation} {\rm N}^{\operatorname{cy}}(\Pi_k, i)\simeq S^{\lambda_d}\wedge(\mathbb{T}/C_{i})_+, \tag{b} \end{equation}
where $C_{i}$ is the $i$-th cyclic group.

Let $\operatorname{THH}(\mathbb{F}_p[x]/(x^k), (x))$ denote the fiber of the canonical map $$\operatorname{THH}(\mathbb{F}_p[x]/(x^k))\to\operatorname{THH}(\mathbb{F}_p),$$ and we write 
$$\operatorname{TP}(\mathbb{F}_p[x]/(x^k), (x))=\hat{\rm H}^{\cdot}(\mathbb{T}, \operatorname{THH}(\mathbb{F}_p[x]/(x^k), (x))).$$

The non-triviality of $\operatorname{TP}(\mathbb{F}_p[x]/(x^k), (x))$ shall imply that $\operatorname{TP}$ is not nil-invariant. In order to obtain the non-triviality, we use the following decomposition.
\begin{lem}
There is a canonical equivalence
\[\operatorname{TP}(\mathbb{F}_p[x]/(x^k), (x))\simeq \prod_{i\geq1} \hat{\rm H}^{\cdot}(\mathbb{T}, {\rm THH}(\mathbb{F}_p)\otimes {\rm N}^{cy}(\Pi_k, i)).\]
\end{lem}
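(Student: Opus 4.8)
The plan is to combine the splitting~(a) of $\operatorname{THH}$ with the wedge decomposition of ${\rm N}^{\operatorname{cy}}(\Pi_k)$ to identify the relative term $\operatorname{THH}(\mathbb{F}_p[x]/(x^k),(x))$, and then to show that the Tate construction carries the resulting infinite wedge to the infinite product in the statement.

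First I would identify the fiber. The augmentation $\mathbb{F}_p[x]/(x^k)\to\mathbb{F}_p$, $x\mapsto0$, corresponds under~(a) and the wedge decomposition to the projection of ${\rm THH}(\mathbb{F}_p)\otimes{\rm N}^{\operatorname{cy}}(\Pi_k)$ onto the summand indexed by $i=0$, which is ${\rm THH}(\mathbb{F}_p)\otimes{\rm N}^{\operatorname{cy}}(\Pi_k,0)\simeq{\rm THH}(\mathbb{F}_p)$. Since this projection admits a section, its fiber is the complementary summand, so that, using that $\otimes$ distributes over wedges,
\[\operatorname{THH}(\mathbb{F}_p[x]/(x^k),(x))\simeq\bigvee_{i\geq1}\bigl({\rm THH}(\mathbb{F}_p)\otimes{\rm N}^{\operatorname{cy}}(\Pi_k,i)\bigr).\]
Write $Y_i={\rm THH}(\mathbb{F}_p)\otimes{\rm N}^{\operatorname{cy}}(\Pi_k,i)$ for the $i$-th summand.

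The crucial input is a connectivity estimate: the connectivity of ${\rm N}^{\operatorname{cy}}(\Pi_k,i)$ tends to infinity with $i$. For $i\notin k\mathbb{N}$ this is immediate from~(b), since ${\rm N}^{\operatorname{cy}}(\Pi_k,i)\simeq S^{\lambda_d}\wedge(\mathbb{T}/C_i)_+$ is $(2d-1)$-connected with $d=\lfloor(i-1)/k\rfloor\to\infty$; for the remaining $i$ one argues directly from the simplicial structure, observing that a nonbasepoint $l$-simplex $x^{a_0}\wedge\dots\wedge x^{a_l}$ of total degree $\sum a_j=i$ with each $a_j\leq k-1$ forces $l+1\geq i/(k-1)$, so the lowest-dimensional nondegenerate simplex, and hence the connectivity of the realization, grows with $i$. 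As ${\rm THH}(\mathbb{F}_p)$ is connective, $\operatorname{conn}(Y_i)\to\infty$ as well. Consequently the canonical map $\bigvee_{i\geq1}Y_i\to\prod_{i\geq1}Y_i$ is an equivalence, because in each fixed degree only finitely many summands contribute.

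It then remains to check that the Tate construction commutes with this product, and I would argue through the Tate cofibration sequence. Homotopy fixed points, being a limit, commute with products, so $(\prod_iY_i)^{h\mathbb{T}}\simeq\prod_iY_i^{h\mathbb{T}}$. Homotopy orbits commute with coproducts and do not lower connectivity, so $\operatorname{conn}((Y_i)_{h\mathbb{T}})\to\infty$ and hence
\[(\textstyle\prod_iY_i)_{h\mathbb{T}}\simeq(\bigvee_iY_i)_{h\mathbb{T}}\simeq\bigvee_i(Y_i)_{h\mathbb{T}}\simeq\prod_i(Y_i)_{h\mathbb{T}}.\]
The norm maps assemble compatibly, and since the Tate object is the cofiber of the norm — a finite colimit, hence interchangeable with products in the stable setting — one obtains
\[\hat{\rm H}^{\cdot}\Bigl(\mathbb{T},\bigvee_{i\geq1}Y_i\Bigr)\simeq\prod_{i\geq1}\hat{\rm H}^{\cdot}(\mathbb{T},Y_i),\]
which is the asserted equivalence. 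The main obstacle is precisely this last interchange: the Tate construction commutes with neither arbitrary coproducts nor arbitrary products in general, and the whole argument hinges on the connectivity estimate above, which forces wedge and product to coincide at the level of orbits and fixed points before the cofiber is formed.
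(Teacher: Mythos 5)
Your proposal is correct and follows essentially the same route as the paper: split $\operatorname{THH}(\mathbb{F}_p[x]/(x^k),(x))$ into the wedge of the pieces ${\rm THH}(\mathbb{F}_p)\otimes{\rm N}^{\operatorname{cy}}(\Pi_k,i)$, use the connectivity growing to infinity to identify the wedge with the product, and then pass through the Tate cofibration sequence (orbits preserve colimits, fixed points preserve limits, and the cofiber of the norm commutes with products in the stable setting). In fact you supply two details the paper leaves implicit, namely the section argument identifying the fiber with the complementary summand and the explicit connectivity estimate for $i\in k\mathbb{N}$ via the lowest-dimensional nondegenerate simplex, so your write-up is, if anything, more complete.
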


\begin{proof}
By (a) and the wedge decomposition of ${\rm N}^{\operatorname{cy}}(\Pi_k)$ above, we have
\[\Sigma{\rm H}_{\cdot}(\mathbb{T}, {\rm THH}(\mathbb{F}_p[x]/(x^k), (x)))\simeq\bigoplus_{i\geq1}\Sigma{\rm H}_{\cdot}(\mathbb{T}, {\rm THH}(\mathbb{F}_p)\otimes{\rm N}^{cy}(\Pi_k, i)),\]
since $\Sigma{\rm H}_{\cdot}(\mathbb{T}, -)$ preserves all homotopy colimits.

\hspace{-6mm}Since the connectivity of $\Sigma{\rm H}_{\cdot}(\mathbb{T}, {\rm THH}(\mathbb{F}_p)\otimes{\rm N}^{cy}(\Pi_k, i))$ goes to  $\infty$ as $i$ goes to $\infty$, we have
 \[\bigoplus_{i\geq1}\Sigma{\rm H}_{\cdot}(\mathbb{T}, {\rm THH}(\mathbb{F}_p)\otimes{\rm N}^{cy}(\Pi_k, i))\simeq\prod_{i\geq1}\Sigma{\rm H}_{\cdot}(\mathbb{T}, {\rm THH}(\mathbb{F}_p)\otimes{\rm N}^{cy}(\Pi_k, i)).\]
Similarly, since $\operatorname{H}^{\cdot}(\mathbb{T}, -)$ preserves all homotopy limits, we have 
\[{\rm H}^{\cdot}(\mathbb{T}, {\rm THH}(\mathbb{F}_p[x]/(x^k), (x)))\simeq\prod_{i\geq1}{\rm H}^{\cdot}(\mathbb{T}, {\rm THH}(\mathbb{F}_p)\otimes{\rm N}^{cy}(\Pi_k, i)).\]
Lastly, since $\operatorname{TP}(\mathbb{F}_p[x]/(x^k), (x))$ is the cofiber of the map $$\Sigma{\rm H}_{\cdot}(\mathbb{T}, {\rm THH}(\mathbb{F}_p[x]/(x^k), (x)))\to{\rm H}^{\cdot}(\mathbb{T}, {\rm THH}(\mathbb{F}_p[x]/(x^k), (x))),$$ we get the desired equivalence.
\end{proof}


It is known that, for a $\mathbb{T}$-spectrum $X$, there is a $\mathbb{T}$-equivalence \[X\otimes(\mathbb{T}/C_{i})_+\simeq\Sigma[(\mathbb{T}/C_{i})_+, X],\] see for example \cite[8.1]{HM1}. Hence, we have
\begin{eqnarray*}
\hat{\rm H}^{\cdot}(\mathbb{T}, {\rm THH}(\mathbb{F}_p)\otimes({\mathbb T}/C_{i})_+) & = &(\tilde{E}\otimes[E_+, {\rm THH}(\mathbb{F}_p)\otimes({\mathbb T}/C_{i})_+])^{\mathbb T} \nonumber \\
&\simeq & \Sigma(\tilde{E}\otimes[E_+, [(\mathbb{T}/C_{i})_+, {\rm THH}(\mathbb{F}_p)]])^{\mathbb{T}}\nonumber\\
&\simeq&\Sigma(\tilde{E}\otimes[(\mathbb{T}/C_{i})_+, [E_+, {\rm THH}(\mathbb{F}_p)]])^{\mathbb{T}} \\
&\simeq&(\tilde{E}\otimes(\mathbb{T}/C_{i})_+\otimes[E_+, {\rm THH}(\mathbb{F}_p)]])^{\mathbb{T}} \\
&\simeq&\Sigma([(\mathbb{T}/C_{i})_+, \tilde{E}\otimes[E_+, {\rm THH}(\mathbb{F}_p)]])^{\mathbb{T}} \\
& \simeq & \Sigma(\tilde{E}\otimes_{\mathbb{S}}[E_+, {\rm THH}(\mathbb{F}_p)])^{C_{i}} \nonumber \\
& = & \Sigma\hat{\rm H}^{\cdot}(C_{i}, {\rm THH}(\mathbb{F}_p)). \nonumber
\end{eqnarray*}
Furthermore, we have an equivalence of spectra
\[\hat{\rm H}^{\cdot}(C_{i}, {\rm THH}(\mathbb{F}_p)\otimes S^{\lambda_d})\simeq\hat{\rm H}^{\cdot}(C_{p^{v_p(i)}}, {\rm THH}(\mathbb{F}_p)\otimes S^{\lambda_d}),\]
where $v_p$ denotes the $p$-adic valuation.

Hesselholt and Madsen have calculated the homotopy groups of the above spectra \cite[$\S9$]{HM1},
$$\pi_*\hat{\rm H}^{\cdot}(C_{p^n}, {\rm THH}(\mathbb{F}_p)\otimes S^{\lambda_d})\cong S_{{\mathbb{Z}/p^n\mathbb{Z}}}\{t, t^{-1}\},$$
where $t$ is the divided Bott element. More precisely, $\pi_*\hat{\rm H}^{\cdot}(C_{p^n}, {\rm THH}(\mathbb{F}_p)\otimes S^{\lambda_d})$ is a free module of rank $1$ over $\mathbb{Z}/p^n\mathbb{Z}[t, t^{-1}]$ on a generator of degree $2d$. A preferred generator is specified in \cite[Proposition 2.5]{Hesselholt3}. Combining these and (b), we obtain for $i\notin k\mathbb{N}$ a canonical isomorphism
\[
\pi_j\hat{\rm H}^{\cdot}(\mathbb{T}, {\rm THH}(\mathbb{F}_p)\otimes\operatorname{N^{cy}}(\Pi_k, i)) \cong 
\begin{cases}
    \mathbb{Z}/p^{v_p(i)}\mathbb{Z}, & j-\lambda_d+1 \ \text{even}  \\
    0, & j-\lambda_d+1 \ \text{odd},
  \end{cases}
\]
and note that $-\lambda_d+1$ is always odd by definition. They have similarly showed that for $i\in k\mathbb{N}$, there is a canonical isomorphism
\[
\pi_j\hat{\rm H}^{\cdot}(\mathbb{T}, {\rm THH}(\mathbb{F}_p)\otimes\operatorname{N^{cy}}(\Pi_k, i)) \cong 
\begin{cases}
    \mathbb{Z}/p^{v_p(k)}\mathbb{Z}, & j \ \text{odd}  \\
    0, & j \ \text{even}.
  \end{cases}
\]

From these, we obtain the following.
\begin{thm}\label{val} If $j$ is an odd integer, then there is a canonical isomorphism 
\begin{eqnarray*}
\operatorname{TP}_j(\mathbb{F}_p[x]/(x^k), (x))&\cong&\prod_{i\geq 1, i\in k\mathbb{N}}\mathbb{Z}/p^{v_p(k)}\mathbb{Z}\ \times\prod_{i\geq 1, i\notin k\mathbb{N}}\mathbb{Z}/p^{v_p(i)}\mathbb{Z}.
\end{eqnarray*}
If $j$ is an even integer, then $$\operatorname{TP}_j(\mathbb{F}_p[x]/(x^k), (x))=0.$$
\end{thm}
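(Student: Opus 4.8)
The plan is to read off the theorem from the Lemma together with the two homotopy-group computations recorded just above the statement; the single homotopical fact that makes this work is that homotopy groups distribute over products of spectra. First I invoke the canonical equivalence
\[\operatorname{TP}(\mathbb{F}_p[x]/(x^k), (x))\simeq \prod_{i\geq1} \hat{\rm H}^{\cdot}(\mathbb{T}, {\rm THH}(\mathbb{F}_p)\otimes {\rm N}^{cy}(\Pi_k, i))\]
furnished by the Lemma. A product of spectra is a limit, and the functor $\pi_j=[\Sigma^j\mathbb{S},-]$ is corepresentable and therefore preserves limits; hence
\[\operatorname{TP}_j(\mathbb{F}_p[x]/(x^k), (x))\cong \prod_{i\geq1} \pi_j\hat{\rm H}^{\cdot}(\mathbb{T}, {\rm THH}(\mathbb{F}_p)\otimes {\rm N}^{cy}(\Pi_k, i)),\]
with no derived-limit correction, since the relevant diagram is a product rather than a tower.

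Next I would partition the indexing set $\{i\geq1\}$ into $i\in k\mathbb{N}$ and $i\notin k\mathbb{N}$, so that the product becomes the external product of the two corresponding subproducts, and then substitute the two computed formulas. When $j$ is even, every factor vanishes: the $i\in k\mathbb{N}$ formula gives $0$ in even degrees, and the $i\notin k\mathbb{N}$ formula gives $0$ there as well (its nonzero groups sit where $j-\lambda_d+1$ is even, and since $-\lambda_d+1$ is odd this forces $j$ odd). Thus $\operatorname{TP}_j(\mathbb{F}_p[x]/(x^k),(x))=0$. When $j$ is odd, the factor indexed by $i\in k\mathbb{N}$ contributes $\mathbb{Z}/p^{v_p(k)}\mathbb{Z}$ and the factor indexed by $i\notin k\mathbb{N}$ contributes $\mathbb{Z}/p^{v_p(i)}\mathbb{Z}$, which assembles into
\[\prod_{i\geq 1, i\in k\mathbb{N}}\mathbb{Z}/p^{v_p(k)}\mathbb{Z}\ \times\prod_{i\geq 1, i\notin k\mathbb{N}}\mathbb{Z}/p^{v_p(i)}\mathbb{Z},\]
exactly the asserted group.

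All of the genuine content has already been expended in proving the Lemma and in importing the Hesselholt--Madsen calculation of $\pi_*\hat{\rm H}^{\cdot}(C_{p^n}, {\rm THH}(\mathbb{F}_p)\otimes S^{\lambda_d})$, the equivalence (b), and the suspension identity $X\otimes(\mathbb{T}/C_i)_+\simeq\Sigma[(\mathbb{T}/C_i)_+,X]$ that together pin down the homotopy groups of each factor; what remains is bookkeeping. The one step I would take care over, and the only place anything could go wrong, is the interchange of $\pi_j$ with the infinite product: one should confirm that the object produced by the Lemma really is the categorical product of spectra, so that $\pi_j$ distributes over it exactly and no $\varprojlim^{1}$-term intrudes as it would for a sequential inverse limit. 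Granting the Lemma, this interchange is automatic and the theorem follows.
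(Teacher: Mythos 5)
Your proposal is correct and matches the paper's own (largely implicit) argument: the paper likewise deduces the theorem directly from Lemma 3.2 together with the two Hesselholt--Madsen homotopy-group computations, the only remaining step being the interchange of $\pi_j$ with the product, which you justify correctly (homotopy groups of a genuine product of spectra are the product of the homotopy groups, with no $\varprojlim^1$ term). Your parity bookkeeping, including the observation that $-\lambda_d+1$ odd forces the nonvanishing degrees to be odd, is exactly the paper's.
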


Therefore, we get our main result by this theorem. Moreover, this concrete calculation gives us the following.
\begin{cor}If $k=p^r$ with a natural number $r\in\mathbb{N}$, the canonical map
$$\operatorname{TP}_*(\mathbb{F}_p[x]/(x^{p^r}))[1/p]\to \operatorname{TP}_*(\mathbb{F}_p)[1/p]$$ is an isomorphism.
\end{cor}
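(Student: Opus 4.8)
The plan is to deduce the corollary directly from Theorem \ref{val} by a long exact sequence argument, the only genuine content being a boundedness observation about the relevant infinite product. First I would use that $\operatorname{THH}(\mathbb{F}_p[x]/(x^k), (x))$ is by definition the fiber of $\operatorname{THH}(\mathbb{F}_p[x]/(x^k)) \to \operatorname{THH}(\mathbb{F}_p)$. Since the Tate construction $\hat{\rm H}^{\cdot}(\mathbb{T}, -)$ is an exact functor of spectra, applying it yields a fiber sequence
\[\operatorname{TP}(\mathbb{F}_p[x]/(x^k), (x)) \to \operatorname{TP}(\mathbb{F}_p[x]/(x^k)) \to \operatorname{TP}(\mathbb{F}_p).\]
Inverting $p$ is a smashing localization and therefore preserves this fiber sequence, while $\pi_*$ commutes with inverting $p$. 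Hence the map $\operatorname{TP}_*(\mathbb{F}_p[x]/(x^{p^r}))[1/p] \to \operatorname{TP}_*(\mathbb{F}_p)[1/p]$ is an isomorphism if and only if the relative term vanishes, i.e. $\operatorname{TP}_*(\mathbb{F}_p[x]/(x^{p^r}), (x))[1/p] = 0$ in every degree. This reduces the corollary to a computation already packaged in Theorem \ref{val}.

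Next I would invoke Theorem \ref{val} with $k = p^r$. In even degrees the group is already zero, so nothing is required there. In odd degrees it is the product
\[\prod_{i \ge 1,\, i \in p^r\mathbb{N}} \mathbb{Z}/p^{r}\mathbb{Z} \ \times \prod_{i \ge 1,\, i \notin p^r\mathbb{N}} \mathbb{Z}/p^{v_p(i)}\mathbb{Z},\]
using $v_p(p^r) = r$. The remaining task is to show that this product is annihilated by inverting $p$.

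The key step, and the point where the hypothesis $k = p^r$ enters essentially, is the observation that the exponents of the cyclic factors are uniformly bounded. Indeed, each factor indexed by $i \in p^r\mathbb{N}$ is $\mathbb{Z}/p^r\mathbb{Z}$, while for $i \notin p^r\mathbb{N}$ one has $p^r \nmid i$, hence $v_p(i) \le r - 1$ and the factor $\mathbb{Z}/p^{v_p(i)}\mathbb{Z}$ is killed by $p^{r-1}$. Thus the whole product is annihilated by $p^r$, so tensoring with $\mathbb{Z}[1/p]$ gives $0$, as required. I expect the only subtle point to be precisely this boundedness: an infinite product $\prod_i \mathbb{Z}/p^{n_i}\mathbb{Z}$ of finite $p$-groups need not be of bounded exponent, and when the $n_i$ are unbounded it acquires elements of infinite order that survive inverting $p$. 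This is exactly the mechanism behind the non-vanishing in the main theorem when $k$ is not a $p$-power; the constraint that $p^r \nmid i$ forces $v_p(i) \le r-1$ removes that possibility, and the corollary follows.
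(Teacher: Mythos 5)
Your proof is correct and follows essentially the same route as the paper, which deduces the corollary directly from Theorem \ref{val}: when $k=p^r$ every factor in the odd-degree product has exponent dividing $p^r$ (since $i\notin p^r\mathbb{N}$ forces $v_p(i)\le r-1$), so the relative term $\operatorname{TP}_*(\mathbb{F}_p[x]/(x^{p^r}),(x))$ dies after inverting $p$ and the long exact sequence gives the isomorphism. Your boundedness observation is exactly the point, and it correctly identifies the contrast with the case where $k$ is not a $p$-power, where the exponents $v_p(i)$ are unbounded and the product acquires elements of infinite order.
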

Thus, in this specific case, the analogue of Goodwillie's theorem for TP holds. In addition, by \cite[Corollary 1.5]{NS} and the main theorem of  \cite{HM2}, we get the following.
\begin{cor}Topological negative cyclic homology is not nil-invariant, even rationally.
\end{cor}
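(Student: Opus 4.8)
The plan is to deduce the corollary from the computation already performed for $\operatorname{TP}$, by comparing the two theories rationally. In the notation of Section 2, topological negative cyclic homology is the homotopy fixed point spectrum $\operatorname{TC}^{-}(R)=\operatorname{H}^{\cdot}(\mathbb{T},\operatorname{THH}(R))$, which is the identification furnished by \cite[Corollary 1.5]{NS}. Since $\operatorname{H}^{\cdot}(\mathbb{T},-)$ preserves fibers, the fiber of the map induced by the quotient $\mathbb{F}_p[x]/(x^k)\to\mathbb{F}_p$ is $\operatorname{TC}^{-}(\mathbb{F}_p[x]/(x^k),(x)):=\operatorname{H}^{\cdot}(\mathbb{T},\operatorname{THH}(\mathbb{F}_p[x]/(x^k),(x)))$, so it suffices to show that this relative term is nonzero after inverting $p$ whenever $k$ is not a $p$-power.

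To this end I would compare it with $\operatorname{TP}(\mathbb{F}_p[x]/(x^k),(x))$ through the Tate cofibration sequence of Section 2, specialized to $X=\operatorname{THH}(\mathbb{F}_p[x]/(x^k),(x))$ and $C=\mathbb{T}$:
\[\Sigma\operatorname{H}_{\cdot}(\mathbb{T},\operatorname{THH}(\mathbb{F}_p[x]/(x^k),(x)))\to\operatorname{TC}^{-}(\mathbb{F}_p[x]/(x^k),(x))\to\operatorname{TP}(\mathbb{F}_p[x]/(x^k),(x)).\]
The right-hand term is controlled by Theorem \ref{val}: for $k$ not a $p$-power the factors $\mathbb{Z}/p^{v_p(i)}\mathbb{Z}$ coming from $i=p^n\notin k\mathbb{N}$ have unbounded order, so the infinite product carries $p$-torsion of unbounded exponent and survives inverting $p$; thus $\operatorname{TP}_j(\mathbb{F}_p[x]/(x^k),(x))[1/p]\neq 0$ for odd $j$. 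Hence the corollary will follow once the left-hand (homotopy orbit) term is shown to be rationally trivial.

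For that term I would rerun the decomposition of Lemma 3.2 and the displayed computation following it, with $\operatorname{H}_{\cdot}$ in place of $\hat{\operatorname{H}}^{\cdot}$. Using (a), the wedge decomposition of $\operatorname{N}^{cy}(\Pi_k)$, the equivalence $X\otimes(\mathbb{T}/C_i)_+\simeq\Sigma[(\mathbb{T}/C_i)_+,X]$, and (b), one reduces to the homotopy orbit spectra $\operatorname{H}_{\cdot}(C_{p^{v_p(i)}},\operatorname{THH}(\mathbb{F}_p)\otimes S^{\lambda_d})$. By the main theorem of \cite{HM2} these are bounded below, with connectivity tending to $\infty$ as $i\to\infty$ (since $d=\lfloor(i-1)/k\rfloor\to\infty$ and $S^{\lambda_d}$ contributes a shift of $2d$), and with finite $p$-group homotopy in each degree. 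Consequently, in any fixed degree only finitely many summands contribute and each is a finite $p$-group, so every homotopy group of $\Sigma\operatorname{H}_{\cdot}(\mathbb{T},\operatorname{THH}(\mathbb{F}_p[x]/(x^k),(x)))$ is a finite $p$-group and vanishes after inverting $p$.

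Combining these, the long exact sequence yields an isomorphism $\operatorname{TC}^{-}_j(\mathbb{F}_p[x]/(x^k),(x))[1/p]\cong\operatorname{TP}_j(\mathbb{F}_p[x]/(x^k),(x))[1/p]$, which is nonzero for odd $j$ when $k$ is not a $p$-power. Therefore the relative term does not vanish rationally, and the map $\operatorname{TC}^{-}_*(\mathbb{F}_p[x]/(x^k))[1/p]\to\operatorname{TC}^{-}_*(\mathbb{F}_p)[1/p]$ is not an isomorphism. The main obstacle I anticipate is precisely the rational vanishing of the homotopy orbit term: it rests on the connectivity estimate together with the finiteness in each degree of the cyclic-group homotopy orbits of $\operatorname{THH}(\mathbb{F}_p)\otimes S^{\lambda_d}$, which is where the input from \cite{HM2} is essential. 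It is exactly the half-plane (connective) nature of the homotopy orbits, as opposed to the full-plane behavior of the Tate construction that produces the surviving unbounded $p$-torsion, that makes $\operatorname{TC}^{-}$ and $\operatorname{TP}$ agree rationally on the relative term and hence both fail to be nil-invariant.
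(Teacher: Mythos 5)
Your proof is correct, and it reaches the conclusion by a genuinely different route than the paper. The paper's proof is a one-line citation: by \cite[Corollary 1.5]{NS}, for a bounded below cyclotomic spectrum such as $\operatorname{THH}(\mathbb{F}_p[x]/(x^k),(x))$, the spectrum $\operatorname{TC}$ is the fiber of $\varphi-\operatorname{can}\colon\operatorname{TC}^{-}\to\operatorname{TP}$, and by the main theorem of \cite{HM2} the relative $\operatorname{TC}\simeq$ relative $K$-theory of $(\mathbb{F}_p[x]/(x^k),(x))$ consists of finite $p$-groups in each degree, hence vanishes after inverting $p$; so $\operatorname{TC}^{-}_*(\mathbb{F}_p[x]/(x^k),(x))[1/p]\cong\operatorname{TP}_*(\mathbb{F}_p[x]/(x^k),(x))[1/p]\neq 0$ by Theorem \ref{val}. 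You instead use the \emph{other} fiber sequence: the Tate cofibration sequence over the canonical map, whose fiber is the suspended homotopy orbits $\Sigma\operatorname{H}_{\cdot}(\mathbb{T},-)$, and you kill that fiber rationally by a direct connectivity-plus-finiteness argument using the decompositions (a) and (b) and $\pi_*\operatorname{THH}(\mathbb{F}_p)=\mathbb{F}_p[\sigma]$. Both arguments are valid. Yours is more self-contained --- it needs no $K$-theoretic input (no McCarthy theorem, no Theorem A of \cite{HM2}), only material already established in Section 3 --- and it proves the slightly sharper statement that the \emph{canonical} map, rather than $\varphi-\operatorname{can}$, induces the rational equivalence on relative terms; in fact your vanishing step can be simplified further, since homotopy groups of all spectra in sight are modules over $\mathbb{F}_p[\sigma]$, hence $p$-torsion, so direct sums of them die after inverting $p$ without any finiteness count. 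The paper's route is shorter given the literature and exhibits the conceptual point that the rational failure of nil-invariance for $\operatorname{TC}^{-}$ and $\operatorname{TP}$ is measured exactly by relative $\operatorname{TC}\simeq$ relative $K$, which is degreewise finite. Two citation quibbles in your write-up: \cite[Corollary 1.5]{NS} is not the source for the identification $\operatorname{TC}^{-}(R)=\operatorname{H}^{\cdot}(\mathbb{T},\operatorname{THH}(R))$ (that is simply the definition, as in Section 2) but rather the comparison of $\operatorname{TC}$ with the fiber of $\operatorname{TC}^{-}\to\operatorname{TP}$ for bounded below cyclotomic spectra; and the boundedness, connectivity, and degreewise finiteness you invoke come from Theorem B of \cite{HM2} --- item (b) of the paper --- together with the computation of $\pi_*\operatorname{THH}(\mathbb{F}_p)$, not from the main theorem (Theorem A) of \cite{HM2}, which computes relative $K$-groups. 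Neither quibble affects the correctness of your argument.
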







\section*{Acknowledgements} I would like to thank Lars Hesselholt for his tremendous help and suggesting me this topic. I also thank Martin Speirs for reading the draft carefully and daily conversation and the DNRF Niels Bohr Professorship of Lars Hesselholt for the support.

\end{document}